\newcommand\rsmraise[1]{%
  \ifx#1\displaystyle 1 \else
    \ifx#1\textstyle .3 \else
      \ifx#1\scriptstyle .2 \else
        .1%
      \fi
    \fi
  \fi}
\theoremstyle{plain}      
\theoremstyle{definition} 
\newtheorem{thrm}{Theorem}[section]
\newtheorem*{thrm*}{Theorem}
\newtheorem{cor}[thrm]{Corollary}
\newtheorem{lemma}[thrm]{Lemma}
\newtheorem*{lemma*}{Lemma}
\newtheorem*{propn*}{Proposition}
\newtheorem{defn}{Definition}[section]
\newtheorem*{defn*}{Definition}
\newtheorem*{rmk*}{Remark}
\newtheorem{que}{Question}
\newtheorem{cla}{Claim}
\theoremstyle{remark}     
\newtheoremstyle{cited}
  {3pt}
  {3pt}
  {}
  {}
  {\bfseries}
  {.}
  {.5em}
  {\thmname{#1} \thmnumber{#2} \thmnote{\normalfont#3}}
\theoremstyle{cited}
\newcommand\blfootnote[1]{%
  \begingroup
  \renewcommand\thefootnote{}\footnote{#1}%
  \addtocounter{footnote}{-1}%
  \endgroup
}
\begin{document}

\title{Turing Invariant Sets and the Perfect Set Property}
\author{Clovis Hamel, Haim Horowitz and Saharon Shelah}

\keywords{\justifying{perfect set property, Cohen forcing, Turing invariant sets of reals, analytic equivalence relations}}

\subjclass[2010]{03E15, 03E40, 03D10, 03D25}

\blfootnote{Publication 1180 of the third author}

\maketitle
\begin{abstract}
    We show that $ZF+DC+$"all Turing invariant sets of reals have the perfect set property" implies that all sets of reals have the perfect set property. We also show that this result generalizes to all countable analytic equivalence relations.
    
\end{abstract}

\section{Introduction}

The results of this paper are motivated by the following well known open problem:

\begin{que}
Does Turing determinacy ($TD$) imply $AD$?
\end{que}

It was proven by Woodin that $TD+DC_{\mathbb R}+V=L(\mathbb R)$ implies $AD$, however, the above question remained open. Inspired by this question, we asked the following analogous question:

\begin{que}
Let $\Gamma$ be a regularity property (e.g., the perfect set property, Lebesgue measurability, etc), does $ZF+DC+$"all Turing invariant sets have property $\Gamma$" imply that all sets of reals have property $\Gamma$?
\end{que}

The main result of this paper answers the above question in the affirmative when $\Gamma$ is the perfect set property. We also observe that Turing equivalence can be replaced by a more general collection of countable Borel equivalence relations. Furthermore, we provide a recursion theoretic argument due to Liang Yu, showing that the result generalizes to all countable analytic equivalence relations.

$\\$
The paper is organized as follows: The main result of Section 2 is an affirmative answer to Question 2 where $\Gamma$ is the perfect set property. In Section 3, we present Yu's argument generalizing the result of Section 2 to all countable analytic equivalence relations. In Section 4, we present several open problems.
$\\$

$\textbf{Acknowledgment}$: We thank Liang Yu for allowing us to include his arguments in Section 3.

\section{The main result}
\begin{rmk*} 1. In what follows, given a Turing machine $M$ and a real $\eta$, we write $M(\eta)$ for the real computed from $\eta$ by $M$ (i.e. via the associated Turing functional).

2. Although the following proof is using  $DC$ and the recursion theoretic proof in the next section uses $AC_{\omega}$, both can be eliminated by observing that if $V\models ZF+$"all Turing invariant sets have the perfect set property" and $X\in V$ is a set of reals, then $HOD(\mathbb R, X) \models ZF+DC+$"all Turing invariant sets have the perfect set property".
\end{rmk*}

\begin{thrm}
($ZF+DC$): The perfect set property for all Turing invariant sets of reals implies the perfect set property for all sets of reals.

\end{thrm}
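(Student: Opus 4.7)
The plan is to take an arbitrary set $A$ of reals and show that, when $A$ is uncountable, it contains a perfect subset. First, form the Turing saturation $A^{\ast} := \{y : \exists x \in A,\ y \equiv_T x\}$, which is tautologically Turing invariant. Since Turing equivalence classes are countable and $DC$ implies that countable unions of countable sets are countable, the uncountability of $A$ forces the uncountability of $A^{\ast}$, and the hypothesis then furnishes a perfect set $P \subseteq A^{\ast}$.

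Next, I would partition $P$ using the countable Borel structure of Turing equivalence. Enumerating the Turing functionals as $(\Phi_k)_{k \in \omega}$, set
\[
C_{m,n} := \{y : \Phi_n(\Phi_m(y)) = y\}, \qquad P_{m,n} := \{y \in P \cap C_{m,n} : \Phi_m(y) \in A\}.
\]
Each $C_{m,n}$ is Borel, and on $C_{m,n}$ the functional $\Phi_m$ is a continuous injection with continuous inverse $\Phi_n$; hence $\Phi_m$ restricts to a Borel isomorphism of $P \cap C_{m,n}$ onto its Borel image. Every $y \in P \subseteq A^{\ast}$ is Turing equivalent to some $x \in A$ via some such pair, so $P = \bigcup_{m,n} P_{m,n}$, and by $AC_\omega$ some $P_{m_0,n_0}$ is uncountable. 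Writing $\Phi := \Phi_{m_0}$ and $B := \Phi[P \cap C_{m_0,n_0}]$, the image $\Phi[P_{m_0,n_0}]$ equals $A \cap B$, an uncountable subset of $A$ confined to the Borel set $B$.

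The main obstacle lies in the final step: extracting from $A \cap B$ a perfect subset of $A$. Since $A \cap B$ need not itself be Turing invariant, the hypothesis cannot be applied directly, and a naive iteration --- replacing $A$ by $A \cap B$ and repeating --- produces only a descending chain of Borel envelopes with no immediate guarantee of convergence to a perfect set. I expect the Cohen forcing flagged in the paper's keywords to enter precisely here, in the spirit of the Mansfield--Solovay argument for $\Sigma^{1}_{2}$-PSP: working inside a countable transitive model $M$ containing a code for $A$, one uses the construction above to build a perfect tree of Cohen conditions in $M$ whose generic branches all land in $A$, thereby producing the required perfect subset. This linking of the partition step with Cohen genericity is the delicate point, and where the bulk of the technical work of the proof should live.
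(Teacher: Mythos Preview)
You have correctly located the gap: from an uncountable $P_{m_0,n_0}$ you cannot extract a perfect set, because the defining condition ``$\Phi_{m_0}(y)\in A$'' is exactly as complicated as $A$ itself. But your proposed repair is off target. A Mansfield--Solovay argument cannot work here: $A$ is a completely arbitrary set of reals, carrying no definability, so there is no ``code for $A$'' to put into a countable model, and Cohen conditions cannot decide membership in $A$.

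The missing idea is a preliminary reduction that makes the countable partition \emph{disjoint}. One first fixes a perfect set $P_0\subseteq 2^\omega$ whose distinct elements are pairwise Turing-inequivalent; the paper obtains $P_0$ as the branches of a tree of mutually Cohen-generic reals over a countable $N\prec H(\kappa)$ (equivalently, in the recursion-theoretic version, a perfect set of hyperimmune-free reals of pairwise distinct degree). Via a homeomorphism $2^\omega\cong P_0$ it suffices to treat uncountable $A\subseteq P_0$. Now decompose the Turing closure $[P_0]_T$ as $\bigcup_n Y_n$, where $Y_n$ is the image of $X_n\subseteq P_0$ under the $n$-th functional and $X_n$ records the \emph{least} index witnessing the equivalence. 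The mutual genericity forces the $Y_n$ to be closed and pairwise disjoint. Writing $A_{1,n}$ for the image of $A\cap X_n$, one has $A_{1,n}\subseteq Y_n$ and $[A]_T=\bigcup_n A_{1,n}$. The perfect $P\subseteq[A]_T$ can therefore be shrunk (by a Baire-category argument inside $P$) to a perfect subset of a single $Y_{n^*}$; disjointness then gives $P\subseteq [A]_T\cap Y_{n^*}=A_{1,n^*}$ for free, with no further reference to $A$. Pulling back along the $n^*$-th functional, which is a homeomorphism on this piece, yields a perfect subset of $A\cap X_{n^*}\subseteq A$. Thus Cohen genericity enters at the very beginning, to manufacture $P_0$, and not at the end.
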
 
\begin{proof}
Let $\kappa$ be large enough and fix a countable elementary submodel $N$ of $(H(\kappa), \in)$. Fix a perfect tree $T$ such that, for every $n<\omega$ and pairwise distinct $\eta_0,...,\eta_{n-1} \in lim(T)$, $(\eta_0,...,\eta_{n-1})$ is $N$-generic for $(2^{<\omega}, \leq)^n$. Clearly, if $\eta \neq \nu \in lim(T)$, then $\eta$ and $\nu$ are not Turing equivalent. We break the proof into five claims:

\begin{cla}
For every pair $(M_1, M_2)$ of Turing machines, there is a natural number $n=n(M_1, M_2)$ such that, for every $\eta \neq \nu \in lim(T)$, if $\eta \restriction n=\nu \restriction n $ then: 
\begin{itemize}
    \item[(a)] $M_2(M_1(\eta))=\eta$ iff $M_2(M_1(\nu))=\nu$.
    \item[(b)] If $M_1(\eta)=M_1(\nu)$, then $M_1(\eta) \in N$.
    \item[(c)] $M_1(\eta)=M_2(\eta)$ iff $M_1(\nu)=M_2(\nu)$.
    \item[(d)] If $M_2(M_1(\eta))=\eta$, then $M_1(\eta)$ is not Turing equivalent to $M_1(\nu)$.
\end{itemize}
\end{cla}

\begin{proof}[Proof of Claim 1]
First we note that clause (d) follows from clause (a): Suppose towards contradiction that $M_2(M_1(\eta))=\eta$ and $M_1(\eta)$ is Turing equivalent to $M_1(\nu)$, then $\eta=M_2(M_1(\eta))$ is Turing equivalent to $M_2(M_1(\nu))$. By clause (a), $M_2(M_1(\nu))=\nu$, hence $\eta$ is Turing equivalent to $\nu$, a contradiction. Clause (b) follows from the mutual genericity over $N$ of the branches in $lim(T)$. We shall now prove clause (a), the proof of clause (c) is similar. Given $\eta \in lim(T)$, there is some $n$ such that $\eta \restriction n$ (as a Cohen condition) decides the truth value of $"M_2(M_1(\eta))=\eta"$ and such that for every $\eta \restriction n \leq \nu \in lim(T)$, $M_2(M_1(\nu))=\nu$ iff $M_2(M_1(\eta))=\eta$. Denote $\eta \restriction n$ by $c_{\eta}$ and the set of $\nu \in lim(T)$ such that $\nu \restriction n= c_{\eta}$ by $U_{\eta}$. By compactness, there is some $k<\omega$ and $\eta_0,...,\eta_{k-1}$ such that $2^{\omega}=U_{\eta_0} \cup ... \cup U_{\eta_{k-1}}$. Let $n=n(M_1, M_2)$ be the maximum length of $\{c_{\eta_i} : i<k \}$, then $n$ is as required. This completes the proof of Claim 1.
\end{proof}

Now let $((M_{n,0}, M_{n,1}) : n<\omega)$ be an enumeration of all ordered pairs of Turing machines, where $M_{0, 0}$ and $M_{0, 1}$ act as the identity function. For $n<\omega$, let $X_n$ be the set of all $\eta \in lim(T)$ such that:

\begin{enumerate}
    \item $M_{n,1}(M_{n,0}(\eta))=\eta$, and
    \item $M_{n,0}(\eta) \notin \{M_{l,0}(\eta) : l<n, M_{l,1}(M_{l,0}(\eta))=\eta\}$.
    
\end{enumerate}

Now, for each $n<\omega$, let $Y_n=\{M_{n,0}(\eta) : \eta \in X_n\}$.

\begin{cla}
For every $n<\omega$, there exists $k_n$ such that, for every $\eta \in lim(T)$, $\eta \restriction k_n$ determines the truth value of $"\eta \in X_n"$. It follows that each $X_n$ is closed, and hence, each $Y_n$ is closed (being a continuous image of a compact set).
\end{cla}
\begin{proof}[Proof of Claim 2]
This is similar to the proof of Claim 1. Given $\eta \in lim(T)$, there is some $n<\omega$ such that $\eta \restriction n$ decides the membership of the Cohen generic in $X_n$. Denote $\eta \restriction n$ by $c_{\eta}$ and denote the set of all $\nu \in lim(T)$ such that $\nu \restriction n=c_{\eta}$ by $U_{\eta}$. Again, by compactness, there is some $k<\omega$ and $\eta_0,...,\eta_{k-1}$ such that $2^{\omega}=U_{\eta_0} \cup ... \cup U_{\eta_{k-1}}$, and we let $k_n$ be the maximum length of $\{c_{\eta_i} : i<k \}$. This completes the proof of Claim 2.
\end{proof}

\begin{cla}
$\underset{n<\omega}{\bigcup}Y_n$ is the closure of $lim(T)$ under Turing equivalence.
\end{cla}
\begin{proof}[Proof of Claim 3]
Every element of $\underset{n<\omega}{\bigcup}Y_n$ is Turing equivalent to an element of $lim(T)$, by the definition of $X_n$ and $Y_n$. Suppose that $\nu$ is Turing equivalent to some $\eta \in lim(T)$, so there are Turing machines $M_0$ and $M_1$ such that $\nu=M_0(\eta)$ and $\eta=M_1(\nu)$. There is some $n<\omega$ such that $(M_0,M_1)=(M_{n,0},M_{n,1})$, and therefore, $M_{n,1}(M_{n,0}(\eta))=M_1(M_0(\eta))=\eta$ and $M_{n,0}(\eta)=\nu$. Let $m<\omega$ be the minimal natural number with this property, then $\eta \in X_m$ and $\nu=M_{m,0}(\eta) \in Y_m$. This completes the proof of Claim 3.
\end{proof}

\begin{cla}
$\{Y_n : n<\omega \}$ is a family of pairwise disjoint sets.
\end{cla}

\begin{proof}[Proof of Claim 4]
Suppose towards contradiction that there is some $\eta \in Y_n \cap Y_m$ where $m<n$. Let $\nu \in X_n$ and $\nu' \in X_m$ such that $\eta=M_{n,0}(\nu)=M_{m,0}(\nu')$, then $\nu'=M_{m,1}(M_{m,0}(\nu'))=M_{m,1}(M_{n,0}(\nu)) \in N[\nu]$. By the mutual genericity of $\nu$ and $\nu'$, it must be the case that $\nu=\nu'$, so $\eta=M_{n,0}(\nu)=M_{m,0}(\nu)$. But this contradicts the fact that $\nu \in X_n$. It follows that $Y_n \cap Y_m=\emptyset$, which completes the proof of the Claim 4.
\end{proof}
Fix a homeomorphism $F: 2^{\omega} \rightarrow lim(T)$. In order to show that every uncountable $A \subseteq 2^{\omega}$ contains a perfect subset, it suffices to show that every uncountable $B\subseteq lim(T)$ contains a perfect subset: If $A\subseteq 2^{\omega}$ is uncountable, then $B=\{F(\eta) : \eta \in A\} \subseteq lim(T)$ is uncountable and contains a perfect subset, and by $F$ being a homeomorphism, so does $A=F^{-1}(B)$.
\\
\\
Now let $A \subseteq lim(T)$ be uncountable, we shall find a perfect subset of $A$. For $n<\omega$, let $A_{1, n}=\{M_{n,0}(\eta) : \eta \in A \cap X_n\}$ and let $A_2=\underset{n<\omega}{\bigcup}A_{1,n}$.

\begin{cla}
$A_2$ is Turing invariant.
\end{cla}

\begin{proof}[Proof of Claim 5]
We shall prove that $A_2$ is the closure of $A$ under Turing equivalence. Obviously, every element of $A_2$ is Turing equivalent to an element of $A$, by the definition of $A_2$. Suppose now that $\nu$ is Turing equivalent to some $\eta \in A$, then there is a minimal $n<\omega$ such that $M_{n,1}(M_{n,0}(\eta))=\eta$ and $M_{n,0}(\eta)=\nu$. Therefore, $\eta \in X_n \cap A$, hence $\nu \in A_{1,n} \subseteq A_2$. This completes the proof Claim 5.
\end{proof}

As $A_2$ is Turing invariant and uncountable (recalling that it contains $A$), by the assumption, it contains a perfect subset $P$. Note that $A_{1,n} \subseteq Y_n$ for every $n<\omega$, so $P\subseteq \underset{n<\omega}{\bigcup}Y_n$. As the $Y_n$ are closed and pairwise disjoint, we may assume WLOG that there is some $n^*<\omega$ such that $P\subseteq Y_{n^*}$, so $P\subseteq A_2 \cap Y_{n^*}=A_{1,n^*}$ (recalling that the $Y_n$ are pairwise disjoint and $A_{1, n} \subseteq Y_n$). Let $A_3=\{M_{n^*,1}(\eta) : \eta \in P\}$, then $A_3 \subseteq A$. Therefore, it suffices to show that $A_3$ is perfect. Note that if $\eta, \eta' \in P \subseteq Y_{n^*}$, then there are $\nu, \nu' \in X_{n^*}$ such that $\eta=M_{n^*,0}(\nu)$ and $\eta'=M_{n^*,0}(\nu')$, and therefore, if $M_{n^*,1}(\eta)=M_{n^*,1}(\eta')$, then $\nu=\nu'$ and $\eta=\eta'$, so $M_{n^*,1}$ is injective and continuous on $P$. Similarly, $M_{n^*,0}$ is injective on $A_3$: Note that if $\eta \in P \subseteq Y_{n^*}$, then $\eta=M_{n^*,0}(\nu)$ for some $\nu \in X_{n^*}$, hence $M_{n^*,1}(\eta)=\nu \in X_{n^*}$. Therefore, $A_3 \subseteq X_{n^*}$. Note that $M_{n^*,0}$ is injective on $X_{n^*}$, hence it follows that $M_{n^*,0}$ is injective and continuous on $A_3$. It's easy to verify that $M_{n^*,1}$ restricted to $P$ is the inverse of $M_{n^*,0}$ restricted to $A_3$, and it follows that $A_3$ is perfect. This completes the proof of Theorem 1.
\end{proof}

Finally, we observe that the above results can be generalized as follows:

\begin{defn}
Let $\mathcal F=\{f_n : n<\omega\}$ a countable family of ground model-definable partial continuous functions from a Polish space $X$ to itself and let $\{(g_{n,0}, g_{n,1}) : n<\omega\}$ be a fixed enumeration of all ordered pairs from $\mathcal F$. Let $E_{\mathcal F}$ be the following relation on $X$: $xE_{\mathcal F}y$ iff there is some $n<\omega$ such that $g_{n,0}(x)=y$ and $g_{n,1}(y)=x$. It's not hard to see that $E_{\mathcal F}$ is countable Borel equivalence relation on $X$.
\end{defn}

Note that the only property of the Turing equivalence relation that we used in our proof is that it has the form $E_{\mathcal F}$ where $\mathcal F$ is the collection of all functions of the form $M(\eta)=\nu$ where $M$ is a Turing machine. Therefore, we obtain the following corollary:

\begin{cor}  Assume $ZF+DC$. Let $E$ be a countable Borel equivalence relation of the form $E_{\mathcal F}$ where $\mathcal F$ is as above. If all $E$-invariant sets of reals have the perfect set property, then all sets of reals have the perfect set property. In particular, the above result holds for $E=E_0$.

\end{cor}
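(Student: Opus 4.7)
The plan is to observe that the proof of Theorem 2.1 uses only two features of Turing functionals: (i) each machine $M$ defines a partial continuous function $2^{\omega} \rightharpoonup 2^{\omega}$ that is definable from parameters in the ground model, so that statements of the form "$M_2(M_1(\eta))=\eta$" are decided by finite Cohen conditions; and (ii) there are only countably many of them, giving a fixed enumeration of ordered pairs. Both features hold for $\mathcal{F}$ by assumption, so I would simply rerun the proof of Theorem 2.1 with $M_{n,i}$ replaced throughout by $g_{n,i}$.

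Concretely, I would first pick $\kappa$ large enough and a countable elementary submodel $N \prec (H(\kappa), \in)$ with $\mathcal{F}$ and the enumeration $\{(g_{n,0},g_{n,1}) : n<\omega\}$ in $N$. Then I would fix a perfect tree $T \subseteq 2^{<\omega}$ with the property that any finite tuple of pairwise distinct branches of $T$ is mutually $N$-generic for the appropriate finite product of Cohen forcings, exactly as before. Mutual genericity of distinct branches $\eta \neq \nu \in \mathrm{lim}(T)$ again forces them to be $E_{\mathcal{F}}$-inequivalent: if $g_{n,0}(\eta)=\nu$ and $g_{n,1}(\nu)=\eta$, then $\eta \in N[\nu]$, contradicting mutual genericity.

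With that setup, I would reprove the five claims verbatim. Claim 1 is a pure continuity/compactness argument: for each pair $(g_{n,0},g_{n,1})$, the truth value of each of the listed equalities is forced by some finite condition, and compactness of $\mathrm{lim}(T)$ yields a single uniform length $n(g_{n,0},g_{n,1})$; clause (b) follows from mutual $N$-genericity, and (d) from (a) as before. Claim 2 (closedness of $X_n$ and hence of $Y_n = g_{n,0}(X_n)$) is the same compactness argument. Claim 3 is immediate from the definition of $E_{\mathcal{F}}$ and the fact that $\{(g_{n,0},g_{n,1}) : n<\omega\}$ enumerates all pairs. Claim 4 (pairwise disjointness of the $Y_n$) again uses mutual genericity: if $g_{n,0}(\nu) = g_{m,0}(\nu')$ with $m<n$ and $\nu \in X_n, \nu' \in X_m$, then $\nu' \in N[\nu]$ forces $\nu = \nu'$, contradicting $\nu \in X_n$. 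Claim 5 is formal.

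The endgame is then identical: given uncountable $A \subseteq \mathrm{lim}(T)$ (after transporting via a homeomorphism $F : 2^{\omega} \to \mathrm{lim}(T)$), one forms $A_2 = \bigcup_n \{g_{n,0}(\eta) : \eta \in A \cap X_n\}$, which is $E_{\mathcal{F}}$-invariant, applies the hypothesis to get a perfect $P \subseteq A_2$, localizes to a single $Y_{n^*}$ using closedness and pairwise disjointness, and lets $A_3 = g_{n^*,1}(P) \subseteq A$; continuity and mutual inverseness of $g_{n^*,0}$ and $g_{n^*,1}$ on $P$ and $A_3$ make $A_3$ a perfect set. The only point requiring a moment's thought is the first — verifying that ``ground model-definable continuous'' is exactly what the Cohen-genericity arguments over $N$ need — but this is precisely what guarantees that the relevant sets are coded in $N$ and so have their membership decided by conditions, so the obstacle is routine rather than substantive. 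The case $E = E_0$ follows by taking $\mathcal{F}$ to be the countable family of finite bit-flips.
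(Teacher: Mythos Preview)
Your proposal is correct and follows exactly the paper's own approach: the paper simply observes, in the paragraph preceding the corollary, that the proof of Theorem~2.1 used nothing about Turing machines beyond the fact that Turing equivalence is of the form $E_{\mathcal F}$, so the corollary follows by rerunning the same proof with $g_{n,i}$ in place of $M_{n,i}$. You have spelled this out in more detail than the paper does (walking through each of the five claims and noting that $\mathcal F$ should be placed in $N$), but the strategy is identical.
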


\section{A recursion theoretic proof and a generalization to all countable analytic equivalence relations}

In this section, we sketch a recursion theoretic reformulation of the proof of the main result from Section 2, and show how to modify the argument in order to generalize the result of Section 2 to all countable analytic equivalence relations. All arguments in this section are due to Liang Yu.
$\\$

We begin by sketching a recursion theoretic proof of Theorem 2.1:

\begin{proof}
Let $P\subset 2^{\omega}$ be a perfect set so that
\begin{itemize}
\item[(1)] any two different reals from $P$ have different degrees;
\item[(2)] any real in $P$ is hyperimmune-free.
\end{itemize}
 Now fix an uncountable set $A$ of reals. WLOG, we may assume that $A\subseteq P$. By Property (2), the Turing closure $[A]_T=\{x\mid \exists y \in A(\equiv_T x)\}=[A]_{tt}=\{x\mid \exists y \in A(\equiv_{tt} x)\}$, where $tt$ means truth-table reduction,  has a perfect subset $Q$.  Now for any pair of indexes of truth-table reductions $e,i \in \omega$, let $Q_{e,i}=\{z\mid \exists x\in Q(z=\Phi_e^x \wedge x=\Phi_i^z)\}$. Then for each $e,i$, $Q_{e,i}$ is a closed set and $\bigcup_{e,i}Q_{e,i}=[Q]_{tt}=[Q]_T$. Also by Property (1), $Q_{e,i}\cap P\subseteq A$ and $\bigcup_{e,i}(Q_{e,i}\cap P)=A\cap [Q]_T$ and $[Q]_T= [[Q]_T\cap A]_T$. By $DC$ (in fact $AC_{\omega}$), $[Q]_T\cap A$ is uncountable. By $DC$ ($AC_{\omega}$, again), there must be some $e,i$ so that $Q_{e,i}\cap A$ is uncountable. Then $Q_{e,i}\cap P=Q_{e,i}\cap A$ is an uncountable closed set and so must contain a perfect subset.
\end{proof}

\begin{thrm}In Theorem 2.1, Turing equivalence can be replaced by any countable analytic equivalence relation.

\end{thrm}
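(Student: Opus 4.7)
The plan is to push the recursion-theoretic argument for Theorem 2.1 through for an arbitrary countable analytic equivalence relation $E$ on $2^{\omega}$. Yu's proof uses a perfect set $P$ with two properties: distinct reals in $P$ are pairwise Turing-inequivalent (so $P$ is $E$-independent), and the reals in $P$ are hyperimmune-free (so $E$-witnesses on $P$ are realized by continuous truth-table reductions, making the pieces $Q_{e,i}$ closed). To generalize, one needs a perfect $E$-independent $P$ together with some continuity of $E$'s generators, and these two ingredients must be produced abstractly.

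First I would observe that a countable analytic equivalence relation is Borel, via a Lusin--Novikov-type uniformization for $\Sigma^1_1$ sets with countable sections. By Feldman--Moore, $E=\bigcup_n \mathrm{graph}(g_n)$ for a countable group $\{g_n:n<\omega\}$ of Borel automorphisms of $2^{\omega}$. I would then refine the Polish topology to a finer Polish topology $\tau'$ with the same Borel $\sigma$-algebra in which every $g_n$ is continuous, hence a $\tau'$-homeomorphism; this plays the role of hyperimmune-freeness, since the $\tau'$-image of a $\tau'$-closed set under any $g_n$ is again $\tau'$-closed. Finally, applying Silver's perfect set theorem to the Borel relation $E$ on $(2^{\omega},\tau)$---valid since $E$ has uncountably many, countable, classes---I would obtain a $\tau$-perfect $E$-independent set $P$, the analog of Yu's condition (1).

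With $P$ chosen, Yu's argument carries over almost verbatim. Given uncountable $A\subseteq 2^{\omega}$, use the $\tau$-homeomorphism $2^{\omega}\cong P$ to reduce to $A\subseteq P$. The $E$-saturation $[A]_E=\bigcup_n g_n[A]$ is $E$-invariant and uncountable, so by hypothesis contains a $\tau$-perfect subset; inside this, extract a $\tau'$-perfect subset $Q$, which exists by the perfect set property for Borel sets in the Polish space $(2^{\omega},\tau')$. The pieces $Q_n:=g_n[Q]$ are $\tau'$-closed (since $g_n$ is a $\tau'$-homeomorphism), hence Borel in $\tau$, and cover $[Q]_E$. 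The $E$-independence of $P$ yields $Q_n\cap P\subseteq A$ exactly as in Yu's argument: any $z\in Q_n\cap P$ is $E$-equivalent to some $q\in Q\subseteq [A]_E$, hence to some $a\in A\subseteq P$, and then $z=a$ by $E$-independence. An $AC_\omega$ counting argument gives $A\cap[Q]_E$ uncountable (one element per class of $[Q]_E$, each of which meets $Q$ in a countable set), so some $Q_n\cap A$ is uncountable. Since $Q_n\cap A=Q_n\cap P$ is Borel and uncountable, it contains a $\tau$-perfect subset by the perfect set property for Borel sets, and this perfect subset is contained in $A$.

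The main obstacle is the joint construction of $P$: Yu's recursion-theoretic conditions (distinct degrees, hyperimmune-freeness) do not translate to a general $E$ and must be replaced by the topology-refinement plus Silver combination described above. Several auxiliary facts need to be available in $ZF+DC$: that countable $\Sigma^1_1$ equivalence relations are Borel; that Silver's perfect set theorem applies; that refinement to a Polish topology making countably many Borel functions continuous is possible; and that the perfect set property for Borel sets holds. Each of these is standard, but the coordination of $\tau$ and $\tau'$---ensuring that the final perfect subset of $A$ is perfect in the original Polish topology and not merely in $\tau'$---requires some care, and is the place where one must check that no step of the argument has secretly used the finer topology in a way that obstructs transfer back to $\tau$.
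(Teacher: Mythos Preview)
Your argument is correct, but it is a genuinely different proof from the one in the paper. The paper (following Yu) does \emph{not} work with $E$ directly via Feldman--Moore generators and a refined topology. Instead it observes that for any countable $\Sigma^1_1$ equivalence relation, $xEy$ implies $x\equiv_h y$; thus $E$-invariance implies hyperdegree-invariance, and it suffices to rerun the Turing-case argument one level up. The perfect set $P$ is chosen with (1$'$) distinct reals having distinct hyperdegrees and (2$'$) every real $\Delta^1_1$-dominated, the hyperarithmetic analogues of Yu's conditions (1) and (2); a short lemma shows that $\Delta^1_1$-domination lets one bound $x^{(\alpha)}$ by $x\oplus\emptyset^{(\beta)}$ for recursive $\alpha,\beta$, so that the pieces $Q_{e,i}$ defined with hyperarithmetic reductions are again closed. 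The rest is verbatim the recursion-theoretic proof of Theorem 2.1.

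Your route trades this recursion-theoretic machinery for standard descriptive-set-theoretic tools: Feldman--Moore to get Borel generators, a topology refinement to make them continuous, and Silver (or, more simply, Mycielski, since a countable Borel $E$ is meager) for the $E$-independent perfect $P$. This is arguably more self-contained for a reader without a background in hyperarithmetic theory, and it makes transparent that the only feature of $E$ being used is ``countable Borel''. The paper's approach, by contrast, keeps the argument uniform with the Turing case and highlights the effective content. One small remark: your detour through $\tau'$ is not actually needed---since the $g_n$ are Borel bijections, $g_n[Q]$ is already Borel for $Q$ the $\tau$-perfect set handed to you by the hypothesis, and an uncountable Borel $Q_n\cap P$ has a $\tau$-perfect subset directly. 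The $\tau'$ step is harmless but can be dropped.
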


\begin{proof}

We begin with a short lemma: 

\begin{lemma}
If $x$ is $\Delta^1_1$-dominated, then for any $\alpha<\omega_1^{CK}$, there is some $\beta<\omega_1^{CK}$ so that $x^{(\alpha)}\leq x\oplus \emptyset^{(\beta)}$.
\end{lemma}
\begin{proof}
$x^{(\alpha)}$ is Turing equivalent to a $\Pi^0_1(x)$-singleton $f\in \omega^{\omega}$. Since  $x$ is $\Delta^1_1$-dominated, there is a hyperarithmetic function $g$ majorizing $f$. Then $f\leq_T x\oplus g\leq_T x\oplus \emptyset^{(\beta)}$ for some $\beta<\omega_1^{CK}$.
\end{proof}

We shall now return to the proof of Theorem 3.1. Suppose that $E$ is a countable analytic equivalence relation. We may assume that $E$ is a (lightface) $\Sigma^1_1$ countable equivalence relation. For the boldface case, we just need a relativization. By the property of $E$, for any pair $x,y$, $xEy$ implies $x\equiv_h y$, where $\leq_h$ is hyperarithmetic reduction (this follows from the fact that if a $\Sigma^1_1$ set is countable, then all of its members are hyperarithmetic, see e.g. Lemma 2.5.4 in [CY]). let $P\subset 2^{\omega}$ be a perfect set so that
\begin{itemize}
\item[(1')] any two different reals from $P$ have different hyperdegrees;
\item[(2')] any real in $P$ is $\Delta^1_1$-dominated (i.e. for any $x\in P$ and $f\in \omega^{\omega}$ with $f\leq_h x$, there is hyperarithmetic function $g$ dominating $f$. Note that this implies $\omega_1^x=\omega_1^{CK}$).
\end{itemize}
Now fix  an uncountable set $A$ of reals. WLOG, we may assume that $A\subseteq P$. Then by replacing the conditions (1), (2) and Turing reduction with (1'), (2') and hyperarithmetic reduction respectively, we may apply the same arguments as in the recursion theoretic proof of Theorem 2.1  together with Lemma 3.2 to prove that $A$ has a perfect subset.

\end{proof}

\section{open problems} 
As noted in the introduction, it is not known whether Turing determinacy implies $AD$. Furthermore, it’s not even known whether Turing determinacy implies weak consequences of $AD$ such as ”all sets of reals have property $\Gamma$” for a regularity property $\Gamma$. We therefore ask:

\begin{que} Let $\Gamma$ be a regularity property, does Turing determinacy imply that all sets of reals have property $\Gamma$?

\end{que}

\begin{que}
Does Turing determinacy imply that all Turing invariant sets of reals have the perfect set property? A positive answer to this question, combined with the results of this paper, will establish that Turing deter- minacy implies the perfect set property for all sets of reals, answering a question from [Sa] (see also [Lo]).
\end{que}

\begin{que}
For which countable Borel equivalence relations $E$ do we have that "all $E$-invariant sets are determined" imply $AD$? We note that recent progress on this problem has been made in [CFJ].
\end{que}

\begin{que}
For which Borel equivalence relations $E$ and regularity properties $\Gamma$ do we have that $ZF+DC+$"all $E$-invariant sets of reals have property $\Gamma$" imply "all sets of reals have property $\Gamma$"?
\end{que}

\section{References}

[CY] Chi Tat Chong, Liang Yu: Recursion Theory - Computational Aspects of Definability. de Gruyter series in logic and its applications 8, de Gruyter Oldenbourg 2015, ISBN 978-3-11-027555-1, pp. I-XIII, 1-306

[CFJ] Logan Crone, Lior Fishman and Stephen Jackson, Equivalence Relations and Determinacy, arXiv:2003.02238

[Lo] Benedikt Löwe, Turing Cones and Set Theory of the Reals, Archive for Mathematical Logic 40 (2001), pp. 651-664

[Sa] Ramez L. Sami, Turing determinacy and the continuum hypothesis, Archive for Mathematical Logic, October 1989, Volume 28, Issue 3, pp 149-154
\\
\\
(Clovis Hamel) E-mail address: chamel@math.utoronto.ca
\\
(Haim Horowitz) E-mail address: haim@math.toronto.edu
\\
\\
Department of Mathematics University of Toronto
\\
Bahen Centre, 40 St. George St., Room 6290 Toronto,
\\
Ontario, Canada M5S 2E4
\\
\\
(Saharon Shelah) E-mail address: shelah@math.huji.ac.il
\\
\\
Einstein Institute of Mathematics
\\
Edmond J. Safra Campus,
\\
The Hebrew University of Jerusalem.
\\
Givat Ram, Jerusalem 91904, Israel.
\\
\\
Department of Mathematics
\\
Hill Center - Busch Campus,
\\
Rutgers, The State University of New Jersey.
\\
110 Frelinghuysen Road, Piscataway, NJ 08854-8019 USA
\end{document}